\title[Serre's theorem for coherent sheaves]{Serre's theorem for coherent sheaves via Auslander's techniques}
\author{Henning Krause}
\address{Fakult\"at f\"ur Mathematik\\
Universit\"at Bielefeld\\ D-33501 Bielefeld\\ Germany}
\email{hkrause@math.uni-bielefeld.de}
\theoremstyle{plain}
\newtheorem{thm}{Theorem}
\newtheorem{prop}[thm]{Proposition}
\newtheorem{lem}[thm]{Lemma} 
\newtheorem{cor}[thm]{Corollary}
\theoremstyle{definition}
\newtheorem{exm}[thm]{Example}
\theoremstyle{remark}
\newtheorem{rem}[thm]{Remark}
\numberwithin{equation}{section}
\newcommand{\coh}{\operatorname{coh}}
\newcommand{\Coker}{\operatorname{Coker}}
\newcommand{\eff}{\operatorname{eff}}
\newcommand{\End}{\operatorname{End}}
\newcommand{\Ext}{\operatorname{Ext}}
\newcommand{\grmod}{\operatorname{grmod}}
\newcommand{\grproj}{\operatorname{grproj}}
\newcommand{\GrMod}{\operatorname{GrMod}}
\newcommand{\Hom}{\operatorname{Hom}}
\renewcommand{\mod}{\operatorname{mod}}
\newcommand{\Mod}{\operatorname{Mod}}
\newcommand{\rad}{\operatorname{rad}}
\newcommand{\Rad}{\operatorname{Rad}}
\newcommand{\RHom}{\operatorname{\mathbf{R}Hom}}
\newcommand{\Ab}{\mathrm{Ab}}
\newcommand{\op}{\mathrm{op}}
\newcommand{\iso}{\xrightarrow{\raisebox{-.4ex}[0ex][0ex]{$\scriptstyle{\sim}$}}}
\newcommand{\smatrix}[1]{\left[\begin{smallmatrix}#1\end{smallmatrix}\right]}
\newcommand{\xto}{\xrightarrow}
\newcommand*{\intref}[2]{\def\tmp{#1}\ifx\tmp\empty\hyperref[#2]{\ref*{#2}}\else\hyperref[#2]{#1~\ref*{#2}}\fi}
\def\A{\mathcal A} 
\def\C{\mathcal C}
\def\D{\mathcal D}
\def\H{\mathcal H}
\def\I{\mathcal I}
\def\K{\mathcal K} 
\def\L{\mathcal L} 
\def\P{\mathcal P}
\def\R{\mathcal R}
\def\bfD{\mathbf D}
\def\bbP{\mathbb P}
\def\bbX{\mathbb X} 
\def\bbZ{\mathbb Z}
\def\d{\delta}
\def\g{\gamma}
\def\p{\phi}
\def\s{\sigma}
\def\t{\tau}
\def\Ga{\Gamma}
\def\La{\Lambda}
\begin{document}

\keywords{Coherent sheaf, coherent functor, graded module, hereditary algebra}

\subjclass[2020]{14A22 (primary), 16G10, 18E10 (secondary)}

\date{\today}

\maketitle

\begin{abstract}
  For an abelian category and a distinguished object with a graded
  endomorphism ring a necessary and sufficient criterion is given so
  that the category is equivalent to the abelian quotient of the
  category of finitely presented graded modules modulo the Serre
  subcategory of finite length modules. A particular example is the
  category of coherent sheaves on a projective variety, following a
  theorem of Serre from 1955. The proof uses Auslander's theory of
  coherent functors, and there are no noetherianess assumptions. A
  theorem of Lenzing for representations of hereditary algebras is
  given as an application.
\end{abstract}

\section{Introduction}

Serre's theorem relating coherent sheaves on a projective variety to
graded modules over some appropriate graded ring is a cornerstone of
non-commutative geometry, for instance by the non-commutative analogue
of this theorem due to Artin and Zhang \cite{AZ1994,Se1955}. In this
note we revisit the theory, motivated by work of Lenzing which
introduces for any finite dimensional hereditary algebra of infinite
representation type a category of coherent sheaves \cite{Le1986}. The
corresponding graded ring is the preprojective algebra \cite{BGL1987},
which appeared already in work of Gel'fand and Ponomarev \cite{GP1979}
as well as in work of Dlab and Ringel \cite{DR1980}. The novel aspect
of the present work is a new formulation and proof of the analogue of
Serre's theorem which is based on Auslander's theory of coherent
functors \cite{Au1966}. In particular, we do not impose any
noetherianess assumptions. This is necessary because the preprojective
algebra is noetherian only when the corresponding hereditary algebra
is of tame representation type \cite{Ba1985,BGL1987}.

\section{An analogue of Serre's theorem}

We formulate the analogue of Serre's theorem in the setting of
Krull--Schmidt categories. Let $\C$ be an additive category that is
\emph{Krull--Schmidt}. Thus any object $X$ admits a finite
decomposition $X=\bigoplus_i X_i$ such that each $X_i$ is
indecomposable with a local endomorphism ring. The \emph{radical}
$\Rad\C$ of $\C$ is the ideal given by subgroups
\[\Rad(X,Y)\subseteq\Hom(X,Y)\qquad(X,Y\in\C)\]
consisting of all morphisms $X\to Y$ such that for any pair of
decompositions $X=\bigoplus_i X_i$ and $Y=\bigoplus_j Y_j$ into
indecomposable objects no component $X_i\to Y_j$ is an
isomorphism. Let $\Rad^0\C$ denote the ideal of all morphisms in $\C$
and for $n\ge 0$ we set $\Rad^{n+1}\C=(\Rad\C)(\Rad^n\C)$.  We say
that the \emph{length} of a morphism $X\to Y$ is bounded by $n$ if all
morphisms $X'\to Y$ from $\Rad^n\C$ factor through $X\to Y$.

For a $\bbZ$-graded ring $A=\bigoplus_{n\in\bbZ}A_n$ we consider
the category $\GrMod A$ of $\bbZ$-graded right $A$-modules. Let $\grmod A$ denote the
full subcategory of finitely presented modules and $\grproj A$ denotes
the full subcategory of finitely generated projective modules. We
write $\grmod_0A$ for the full subcategory of all finite length
modules and always assume $\grmod_0A\subseteq\grmod A$; this is
automatic if the ring is right noetherian.

We say that a graded ring $A$ is \emph{right coherent} if $\grmod A$
is an abelian category, and $A$ is \emph{semiperfect} if
$\grproj A$ is Krull--Schmidt. These properties hold, for instance,
when $A$ is a right noetherian algebra over a field such that each
homogeneous component $A_n$ is finite dimensional; see \cite{At1956,Kr2015}
for the Krull--Schmidt property.

\begin{thm}\label{th:serre}    
  Let $(\A,C,\s)$ be a triple consisting of an abelian category $\A$,
  a distinguished object $C$, and an equivalence
  $\s\colon\A\iso\A$. Suppose that $\Hom(C,\s^nC)=0$ for all $n <0$
  and that the graded ring
\[A=\bigoplus_{n\ge 0}\Hom(C,\s^nC)\] is right coherent
and semiperfect with $\grmod_0A\subseteq\grmod A$. Then the assignment
\[X\mapsto \Ga_*(X):=\bigoplus_{n\in\bbZ}\Hom(C,\s^nX)\qquad
  (X\in\A)\] admits a partial left adjoint $T\colon\grmod A\to\A$,
which is right exact and determined by $T(A)=C$. The functor $T$ is
exact and induces an equivalence \[(\grmod A)/(\grmod_0 A)\iso\A\] if
and only if the full subcategory $\C\subseteq\A$ consisting of all
direct summands of finite direct sums of objects $\s^{n}C$ with
$n\in\bbZ$ satisfies the following:
\begin{enumerate}
\item[(A1)]  Every object $X\in\A$ admits an epimorphism $C\to X$ with
  $C\in\C$, which can be chosen in $\Rad\C$ when $X\in\C$.
\item[(A2)]  A morphism in $\C$ has finite length if it is an
  epimorphism in $\A$.
\end{enumerate}
\end{thm}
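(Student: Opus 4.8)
The plan is to build $T$ from the equivalence it is forced to restrict to on projectives, reformulate the whole situation in terms of coherent functors on $\C$, and then read off exactness and the Serre quotient description directly from (A1)--(A2).

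\smallskip
\noindent\emph{Construction of $T$.} First note that $\Ga_*(X)$ is a $\bbZ$-graded right $A$-module, and that, with the grading convention for which $A(n)$ is sent to $\s^{n}C$, evaluation gives natural isomorphisms $\Hom_{\GrMod A}(A(n),\Ga_*X)\cong\Hom_\A(\s^{n}C,X)$. Hence the assignment $A(n)\mapsto\s^{n}C$ extends to an additive functor $\grproj A\to\A$; it is fully faithful (the displayed isomorphism with $X=\s^mC$ is the canonical one), and since $\A$ is idempotent complete its essential image is exactly $\C$, so it is an equivalence $\grproj A\iso\C$ with quasi-inverse the restriction of $\Ga_*$. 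Because $A$ is right coherent, $\grmod A$ is abelian and every $M\in\grmod A$ has a presentation $P_1\to P_0\to M\to0$ with $P_i\in\grproj A$; setting $T(M):=\Coker(TP_1\to TP_0)$ gives a well-defined right exact functor with $T(A)=C$. Applying $\Hom_\A(-,X)$ to $TP_1\to TP_0\to TM\to0$ and $\Hom_{\GrMod A}(-,\Ga_*X)$ to the presentation yields two left exact sequences agreeing on $P_0,P_1$, whence $\Hom_\A(TM,X)\cong\Hom_{\GrMod A}(M,\Ga_*X)$, i.e. $T$ is a partial left adjoint of $\Ga_*$.

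\smallskip
\noindent\emph{Passage to coherent functors and the dictionary.} Via $M\mapsto\Hom_{\GrMod A}(-,M)|_{\grproj A}$ the category $\grmod A$ is equivalent to the category of finitely presented (``coherent'') functors $(\grproj A)^{\op}\to\Ab$, and transporting along $\grproj A\iso\C$ identifies it with the coherent functors $\C^{\op}\to\Ab$; under this identification $T$ carries a coherent functor $F$ with presentation $\Hom(-,C_1)\to\Hom(-,C_0)\to F\to0$ to $\Coker(C_1\to C_0)$, and $\Ga_*$ carries $X$ to $\Hom_\A(-,X)|_\C$ (a coherent functor precisely when it lies in $\grmod A$). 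Two facts then do the bookkeeping, both using semiperfectness and coherence: (i) since $A_0$ is semiperfect, the radical filtration $\Rad^n\C$ corresponds to the powers $\frr^{\,n}$ of the graded radical $\frr=\rad(A_0)\oplus A_{\ge1}$, so for $F=\Coker(\Hom(-,C_1)\xrightarrow{f_*}\Hom(-,C_0))$ the associated module is killed by $\frr^{\,n}$ exactly when every morphism in $\Rad^n(-,C_0)$ factors through $f$, i.e. exactly when $f$ has length $\le n$, and, under the standing hypotheses on $A$, this happens for some $n$ if and only if that module lies in $\grmod_0A$; (ii) $TF=0$ if and only if the presenting morphism $f\colon C_1\to C_0$ is an epimorphism in $\A$.

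\smallskip
\noindent\emph{The ``only if'' direction.} Suppose $T$ is exact and induces an equivalence. Density plus exactness give, for $X=TM$ and a projective cover $P\to M$ in $\grmod A$, an epimorphism $TP\to X$ with $TP\in\C$; when $X\in\C$ one takes instead a projective presentation whose first map is in $\Rad(\grproj A)$, using that $\ker T=\grmod_0A$ forces each indecomposable projective to have a nonzero finite length quotient, which gives the refinement clause of (A1). For (A2): if $f\colon C_1\to C_0$ in $\C$ is an epimorphism in $\A$, then by (ii) the associated coherent functor lies in $\ker T=\grmod_0A$, so by (i) $f$ has finite length.

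\smallskip
\noindent\emph{The ``if'' direction, and the main obstacle.} Assume (A1) and (A2). Two applications of (A1) exhibit every $X\in\A$ as the cokernel of a morphism in $\C$, hence as $T$ of the corresponding cokernel in $\grmod A$, so $T$ is dense (no exactness needed). Using (i) and (ii) one gets $\ker T=\grmod_0A$: $\ker T\subseteq\grmod_0A$ is (A2) read through (i)+(ii); for $\grmod_0A\subseteq\ker T$ one checks that $\Ga_*X=\Hom_\A(-,X)|_\C$ has no nonzero finite length subfunctor --- here the refinement clause of (A1) enters, since a section $\varphi\colon C\to\s^kX$ annihilated by $\frr^{\,n}$ dies upon precomposition with an epimorphism in $\Rad^n\C$ and hence vanishes --- so $\Hom_{\GrMod A}(M,\Ga_*X)=0$ for $M$ of finite length and $TM=0$ by the adjunction. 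It remains to show that $T$ is exact and that $\overline T\colon\grmod A/\grmod_0A\to\A$ is fully faithful; faithfulness is then formal, and fullness reduces, by writing a module as a cokernel of projectives and invoking $\Hom_\A(TQ_0,TN)\cong\Hom_{\GrMod A}(Q_0,\Ga_*TN)$, to lifting a map $Q_0\to\Ga_*TN$ with $Q_0$ projective through the unit $\eta_N\colon N\to\Ga_*TN$ modulo finite length. I expect the crux --- simultaneously responsible for exactness of $T$ and for fullness of $\overline T$ --- to be precisely the claim that $\eta_N$ becomes an isomorphism in the Serre quotient $\grmod A/\grmod_0A$ for every $N\in\grmod A$, i.e. that $\Ker\eta_N$ and $\Coker\eta_N$ are directed unions of finite length modules; bounding $\Ker\eta_N$ will again use the refinement in (A1), bounding $\Coker\eta_N$ (the ``$H^1$ of high twists is bounded'' phenomenon) will use (A2), both funneled through a dévissage along the radical filtration. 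This is the step I would budget the most effort for.
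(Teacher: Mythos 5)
Your construction of $T$, the identification $\grmod A\iso\mod\C$ via $\grproj A\iso\C$, and the dictionary (i)--(ii) (finite length modules $\leftrightarrow$ presenting morphisms of finite length, $\ker T\leftrightarrow$ presenting morphisms that are epimorphisms) are exactly the paper's setup, and your ``only if'' direction together with the easy half of the ``if'' direction (density of $T$, and $\ker T=\grmod_0A$ -- your argument that $\Hom(-,X)|_\C$ has no finite length subfunctor is a correct variant of the paper's) are sound. But the proof is not complete: you explicitly defer the crux of the ``if'' direction, namely exactness of $T$ and full faithfulness of $\overline T$, to an unproven claim about the unit $\eta_N\colon N\to\Ga_*TN$, and the route you sketch for it is problematic. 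Without contravariant finiteness of $\C$ in $\A$ (which is \emph{not} among the hypotheses, as the paper stresses in Remark~\ref{re:adjoint}), $\Ga_*TN=\Hom_\A(-,TN)|_\C$ need not be a finitely presented module at all, so ``$\eta_N$ becomes an isomorphism in $\grmod A/\grmod_0A$'' does not typecheck; you would be forced into $\GrMod A$ modulo torsion and a bound on $\Coker\eta_N$, which is exactly the noetherian-flavoured d\'evissage the theorem is built to avoid. The paper never touches the unit: it factors $T$ as $\mod\C\xto{i^*}\mod\A\twoheadrightarrow\A$, notes that $i^*$ is fully faithful and exact once $\C$ generates $\A$, and proves a cofinality statement (every $F\in\mod\A$ receives a morphism $F'\to F$ with $F'\in\mod\C$ that becomes invertible in $\A$, obtained by covering the objects presenting $F$ by objects of $\C$); by the calculus of fractions this makes $(\mod\C)/(\eff(\A,\C))\to(\mod\A)/(\eff\A)$ fully faithful, and composing with Auslander's formula $(\mod\A)/(\eff\A)\iso\A$ finishes the argument.

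A related miscalibration: you expect exactness of $T$ to be bound up with the hard step, but it is the cheap part. Once (A1) gives that $\C$ generates $\A$, every weak kernel sequence in $\C$ is already exact in $\A$ (cover $\Ker f$ by an object of $\C$ and factor through the weak kernel), hence $i^*$ is exact and $T$ is a composite of two exact functors. So the only genuinely missing ingredient in your write-up is the cofinality argument for full faithfulness; as it stands, the statement you call ``the step I would budget the most effort for'' is the theorem's main content in the if-direction, and it is not proved.
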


Fix a triple $(\A,C,\s)$ as above. The algebra
$A=\bigoplus_{n\ge 0}A_n$ is the \emph{orbit algebra} of $C$ with
multiplication given by $xy=(\s^q x)\circ y$ for $x\in A_p$ and
$y\in A_q$.  The assignment $X\mapsto\Ga_*(X)$ yields an equivalence
$\C\iso\grproj A$ and $T\colon\grmod A\to\A$ is the right exact
functor extending
\[\grproj A\iso\C\hookrightarrow\A.\]
Thus we have the following
diagram \[\begin{tikzcd}
\grmod A\arrow[swap]{rd}{T}\arrow[hook]{rr}&&\GrMod A\\
&\A\arrow[swap]{ru}{\Ga_*}& 
\end{tikzcd}\]
and $\Ga_*$ corestricts to a right adjoint of $T$  if and only if
$\C\subseteq\A$ is contravariantly finite so that $\Ga_*(X)$ is a
finitely presented $A$-module for each $X\in\A$; see
Remark~\ref{re:adjoint}. Nonetheless, we have for all $X\in\grmod A$
and $Y\in\A$ the adjointness
isomorphism
\[\Hom_\A(T(X),Y)\cong\Hom_A(X,\Ga_*(Y)).\]

The conditions (A1)--(A2) express the `ampleness' of the pair
$(C,\s)$.  The prototypical example is a theorem of Serre
\cite{Se1955} and we refer to \cite{AZ1994,Po2005} for non-commuative
analogues.

\begin{exm}[Serre]\label{ex:serre}
  Consider the projective variety $\bbP^r(K)$ given by an
  algebraically closed field $K$ and an integer $r\ge 1$. Let
  $\coh\bbP^r(K)$ denote the category of coherent sheaves on
  $\bbP^r(K)$.  The assignment
  \[\mathscr F\mapsto \Ga_*(\mathscr
    F)=\bigoplus_{n\in\bbZ}\Hom(\mathscr O, \mathscr F(n))\] yields a
  functor $\coh\bbP^r(K)\to\grmod A$ to the category of graded modules
  over the orbit algebra of the structure sheaf
  \[A=\bigoplus_{n\ge 0}\Hom(\mathscr O, \mathscr O(n))\cong
    K[t_0,\ldots,t_r].\] Taking a module $M$ to its associated sheaf
  $\widetilde M$ provides a left ajoint functor, which is exact and
  annihilates all modules of finite length. In
  \cite[N\textsuperscript{o}~65]{Se1955} it is shown that the
  canonical morphism $\widetilde{\Ga_*(\mathscr F)} \to \mathscr F$ is
  an isomorphism, while kernel and cokernel of the canonical morphism
  $M\to \Ga_*(\widetilde M)$ are of finite length. An equivalent
  statement is that $M\mapsto\widetilde M$ induces an
  equivalence \[(\grmod A)/(\grmod_0 A)\iso \coh\bbX.\]
\end{exm}

Let us comment on the notions `finite length' and
`torsion' for graded modules, because it is common to work
modulo the category of torsion modules in the context of Serre's theorem.

\begin{rem}
  Let $A=\bigoplus_{n\ge 0}A_n$ be a graded ring and suppose that the ring
  $A_0=A/A_{\ge 1}$ is semisimple. Then a graded $A$-module $M$ has finite
  length if and only if $M$ is finitely generated and torsion, so $M_n=0$ for
  $n\gg 0$. To see this fix a  finitely
  generated module $M$ and consider the sequence
  \[\cdots\twoheadrightarrow M/M_{\ge 2}\twoheadrightarrow M/M_{\ge
      1}\twoheadrightarrow M/M_{\ge 0}\twoheadrightarrow M/M_{\ge
      -1}\twoheadrightarrow\cdots\] which stabilises in the right
  direction since $M$ is finitely generated. It stabilises in both
  directions if and only if $M$ has finite length, since each
  subquotient has finite length (using the assumption
  $\grmod_0A\subseteq\grmod A$ so that  each
  subquotient is finitely generated over $A_0$).
\end{rem}

\section{A relative Auslander formula}

The proof of the analogue of Serre's theorem is based on a relative
version of Auslander's formula; it is established in this section
and may be of independent interest. For instance, the
Gabriel--Popescu theorem for Grothendieck categories is another
result that can be explained in terms  of this Auslander formula.

Let $\C$ be an additive category.  Following Auslander \cite{Au1966}
an additive functor $F\colon\C^\op\to\Ab$ into the category of abelian
groups is called \emph{finitely presented}
or \emph{coherent} if it admits a presentation
\begin{equation}\label{eq:F}
    \Hom(-,X)\to \Hom(-,Y)\to F\to 0.
  \end{equation}
  In that case $X\to Y$ is called the \emph{presenting morphism}.
  Let $\mod\C$ denote the category of finitely presented functors
  $\C^\op\to\Ab$. The assignment $X\mapsto \Hom(-,X)$ yields the fully
  faithful \emph{Yoneda functor} $\C\to\mod\C$.  We collect some basic
  facts from \cite{Au1966} which will be used throughout without
  further reference. These properties reflect the fact that
  $\C\to\mod\C$ is nothing but the completion of $\C$ under finite
  colimits.

A morphism $X\to Y$ is a \emph{weak kernel}
of a morphism $Y\to Z$  if the induced sequence
$\Hom(C,X)\to \Hom(C,Y)\to \Hom(C,Z)$ is exact for all objects $C$.

\begin{lem}
  For an additive category $\C$ we have the following.
\begin{enumerate}
\item The category $\mod\C$ is additive and every morphism has a
  cokernel; it is abelian iff every morphism
  in $\C$ has a weak kernel.
\item The  Yoneda functor $\C\to\mod\C$ admits an exact left
  adjoint if $\C$ is abelian.
\item An additive functor  $\C\to\A$ into a category with
  cokernels extends  to a right exact functor $\mod\C\to\A$.
\item An  additive functor
  $f\colon\C\to\D$ extends to a right exact functor
  $f^*\colon\mod\C\to\mod\D$, and $f^*$ is fully faithful if and only
  if $f$ is fully faithful.
\end{enumerate}
\end{lem}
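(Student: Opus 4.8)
The plan is to derive all four statements from one observation: $\mod\C$ is the free completion of $\C$ under cokernels, obtained by resolving each finitely presented functor by representables and using repeatedly that $\Hom(\Hom(-,X),F)\cong F(X)$, so that every representable is projective in $\Fun(\C^\op,\Ab)$ and hence in $\mod\C$. For (1) I would view $\mod\C$ as the full subcategory of $\Fun(\C^\op,\Ab)$ of cokernels of morphisms between representables and check it is closed under finite direct sums (splice presentations, using $\Hom(-,X)\oplus\Hom(-,X')\cong\Hom(-,X\oplus X')$) and under cokernels: given $\alpha\colon F\to G$, lift the composite $\Hom(-,Y)\twoheadrightarrow F\xrightarrow{\alpha}G$ through $\Hom(-,Y')\twoheadrightarrow G$ to a morphism $\Hom(-,Y)\to\Hom(-,Y')$, and read off that $\Coker\alpha$, formed in $\Fun(\C^\op,\Ab)$, is presented by the induced morphism out of $\Hom(-,X'\oplus Y)$, where $\Hom(-,X')\to\Hom(-,Y')$ presents $G$. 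Since a full additive subcategory of an abelian category closed under kernels and cokernels is itself abelian, it remains to decide when $\mod\C$ is closed under kernels; a short diagram chase (pulling back along an epimorphism from a representable) reduces this to deciding when $\Ker\Hom(-,f)$ is finitely presented for $f\colon X\to Y$ in $\C$. But $\Ker\Hom(-,f)$ is the image of $\Hom(-,Z)\to\Hom(-,X)$ for a morphism $Z\to X$ exactly when $Z\to X$ is a weak kernel of $f$; granting weak kernels and applying this twice — a weak kernel of $Z\to X$ supplies the relations — gives that $\Ker\Hom(-,f)$ is finitely presented, so $\mod\C$ is abelian. Conversely, if $\mod\C$ is abelian then $\Ker\Hom(-,f)$ is finitely generated, and any epimorphism onto it from a representable is a weak kernel of $f$.

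Parts (3) and (4) express the universal property of this completion. For (3), given additive $g\colon\C\to\A$ with $\A$ having cokernels, put $\bar g(F)=\Coker(g(f))$ for $f$ a presenting morphism of $F$; independence of the presentation and functoriality follow by lifting morphisms of finitely presented functors to morphisms of presentations, unique up to homotopy by projectivity of representables, and $\bar g$ is then right exact and restricts to $g$; uniqueness up to isomorphism is automatic, since a right exact functor on $\mod\C$ is determined on representables. Part (4) is the case $\A=\mod\D$, $g=(\text{Yoneda})\circ f$, giving $f^*$ right exact with $f^*\Hom_\C(-,X)=\Hom_\D(-,fX)$, hence preserving representables and presentations. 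If $f$ is fully faithful, then on representables the comparison morphism is, by construction, $\Hom_\C(X,X')\to\Hom_\D(fX,fX')$, which is bijective; one propagates bijectivity first to morphisms from a representable into an arbitrary finitely presented functor (evaluate a presentation of the target at objects of $\C$ and compare cokernels) and then, via a presentation of the source, to all finitely presented functors. Conversely, restricting a fully faithful $f^*$ to representables recovers full faithfulness of $f$.

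For (2), when $\C$ is abelian it has weak kernels, so $\mod\C$ is abelian by (1), and (3) applied with $\A=\C$, $g=\id_\C$ yields a right exact $L\colon\mod\C\to\C$, $L(F)=\Coker(f)$ for $f$ a presenting morphism of $F$. For the adjunction $L\dashv(\text{Yoneda})$: applying $\Hom_{\mod\C}(-,\Hom_\C(-,Z))$ to a presentation $\Hom(-,X)\to\Hom(-,Y)\to F\to 0$ identifies, via Yoneda, $\Hom_{\mod\C}(F,\Hom_\C(-,Z))$ with $\Ker(\Hom_\C(Y,Z)\to\Hom_\C(X,Z))=\Hom_\C(\Coker f,Z)=\Hom_\C(LF,Z)$, naturally in $F$ and $Z$. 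Right exactness of $L$ is then automatic, and the remaining — and, I expect, main — point is that $L$ preserves monomorphisms. Given $F'\hookrightarrow F$ and a presentation $\Hom(-,X_1)\xrightarrow{\Hom(-,\delta)}\Hom(-,X_0)\xrightarrow{p}F\to 0$, I would pull $F'$ back along $p$ to $G:=p^{-1}(F')$, which lies in the abelian category $\mod\C$; as $G$ is a subfunctor of the representable $\Hom(-,X_0)$ it is the image of $\Hom(-,\gamma)$ for some $\gamma\colon Y_0\to X_0$, and left exactness of the Yoneda embedding of $\C$ (so that $\Ker\Hom(-,\gamma)=\Hom(-,\Ker\gamma)$) shows $G$ is presented by $\Ker\gamma\hookrightarrow Y_0$, whence $L$ carries $G\hookrightarrow\Hom(-,X_0)$ to $\Im\gamma\hookrightarrow X_0$. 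Applying $L$ to $F'=G/\Ker p\hookrightarrow\Hom(-,X_0)/\Ker p=F$ and computing each term this way exhibits $L(F'\hookrightarrow F)$ as the inclusion $\Im\gamma/\Im\delta\hookrightarrow X_0/\Im\delta$, which is a monomorphism precisely because $\C$ is abelian. This last point may equally be cited from Auslander \cite{Au1966}.
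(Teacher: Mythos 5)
Your argument is correct and is essentially the standard one: the paper gives no proof of this lemma, deferring to Auslander \cite{Au1966} and Sections~2.1--2.3 of \cite{Kr2021}, and your constructions (presentations by representables and projectivity of $\Hom(-,X)$, weak kernels governing abelianness of $\mod\C$, the cokernel functor as the right exact extension, and the image computation $\Im\gamma/\Im\delta\hookrightarrow X_0/\Im\delta$ showing the left adjoint preserves monomorphisms) are exactly the arguments found there. No gaps.
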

\begin{proof}
  See \cite{Au1966} or Sections~2.1--2.3 in \cite{Kr2021}.
\end{proof}

\begin{exm}
  For a graded ring $A$ the inclusion $\grproj A\to\grmod A$ induces
  an equivalence  $\mod(\grproj A)\iso\grmod A$.
\end{exm}

Let $\A$ be an abelian category. A functor $F\in\mod\A$ with
presentation \eqref{eq:F} is called \emph{effaceable} if it belongs to
the kernel of the left adjoint of the Yoneda functor $\A\to\mod\A$. An
equivalent condition is that the presenting morphism
$\p\colon X\to Y$ is an epimorphism, because the left adjoint
preserves cokernels, so sends
$F=\Coker\Hom(-,\p)$ to $\Coker\p$.  We write $\eff\A$ for the full
subcategory of effaceable functors and note that it is a Serre
subcategory of $\mod\A$.

\begin{prop}[Auslander]
  \pushQED{\qed}
  The right exact functor $\mod\A\to\A$ extending the identity 
  $\A\to\A$ is exact and induces an equivalence
\[(\mod\A)/(\eff\A)\iso\A.\qedhere\]
\end{prop}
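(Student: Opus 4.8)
The plan is to identify the functor in question with a left adjoint of the Yoneda functor, recognise $\eff\A$ as its kernel, and then deduce the equivalence from the formalism of Gabriel quotients. Write $Y\colon\A\to\mod\A$, $X\mapsto\Hom(-,X)$, and let $q\colon\mod\A\to\A$ be the right exact functor extending $\id_\A$, so that $q\,\Hom(-,X)\cong X$ and $qY=\id_\A$ on the nose. Applying $\Hom_{\mod\A}(-,\Hom(-,X))$ to a presentation $\Hom(-,X_1)\to\Hom(-,X_0)\to F\to0$ and invoking Yoneda identifies $\Hom_{\mod\A}(F,\Hom(-,X))$ with $\Hom_\A(\Coker(X_1\to X_0),X)=\Hom_\A(qF,X)$, naturally in $F$ and $X$; hence $q$ is a left adjoint of $Y$, so it is exact by part~(2) of the preceding lemma, the counit $qY\to\id_\A$ is the identity, and by definition $\eff\A=\Ker q$. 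Since $\eff\A$ is a Serre subcategory, $q$ factors as $q=\bar q\circ Q$ with $Q\colon\mod\A\to(\mod\A)/(\eff\A)$ the quotient functor and $\bar q$ exact. Essential surjectivity of $\bar q$ is immediate from $X\cong\bar q(Q\,\Hom(-,X))$, so it remains to show that $\bar q$ is fully faithful.

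For this I would establish two facts. First, that the unit $\eta_F\colon F\to YqF=\Hom(-,qF)$ of the adjunction $q\dashv Y$ has kernel and cokernel in $\eff\A$, and hence becomes invertible under $Q$. Fixing a presentation $\Hom(-,X_1)\xrightarrow{\Hom(-,u)}\Hom(-,X_0)\xrightarrow{\pi}F\to0$ one has $qF=\Coker u$, and naturality of $\eta$ together with $\eta_{\Hom(-,X_0)}=\id$ gives $\eta_F\circ\pi=\Hom(-,p)$ for the canonical epimorphism $p\colon X_0\to\Coker u$; since $\pi$ is epi this forces $\Coker\eta_F=\Coker\Hom(-,p)$, which is effaceable because $p$ is an epimorphism. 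Pulling $\Ker\eta_F\hookrightarrow F$ back along $\pi$ identifies it with $\Hom(-,\Im u)/\Im\Hom(-,u)$, and writing $u$ as the composite of the epimorphism $\bar u\colon X_1\to\Im u$ with the inclusion $\Im u\hookrightarrow X_0$ shows this quotient equals $\Coker\Hom(-,\bar u)$, again effaceable. Second, that every representable $\Hom(-,X)$ is $\eff\A$-closed, i.e. $\Hom_{\mod\A}(S,\Hom(-,X))=0=\Ext^1_{\mod\A}(S,\Hom(-,X))$ for all $S\in\eff\A$: the Hom-vanishing is $\Hom_\A(qS,X)=0$, and for the Ext-vanishing one applies the exact functor $q$ to an extension $0\to\Hom(-,X)\to E\to S\to0$, finds that $q\,\Hom(-,X)\to qE$ is invertible, and concludes from naturality of $\eta$ and $\eta_{\Hom(-,X)}=\id$ that $\eta_E$ restricts to an isomorphism on $\Hom(-,X)$, so the extension splits. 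By the standard Serre-quotient calculus the second fact makes $\Hom_{\mod\A}(F,\Hom(-,X))\to\Hom_{(\mod\A)/(\eff\A)}(QF,Q\,\Hom(-,X))$ bijective for every $F$, so $Q\circ Y$ is right adjoint to $\bar q$ with invertible counit $qY\cong\id_\A$, and its unit at $QF$ is $Q\eta_F$, invertible by the first fact. An adjunction with invertible unit and counit is an adjoint equivalence, so $\bar q$ is an equivalence.

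I expect the delicate point to be the computation of $\Ker\eta_F$ (and, in a treatment that reproves exactness of $q$ rather than citing the lemma, the exactness itself): without noetherian hypotheses a subobject of a representable need not be finitely generated and so cannot be resolved by representables. The remedy is to replace such a subobject by its preimage under a chosen epimorphism from a representable onto the ambient finitely presented functor; that preimage is finitely presented, being an extension of a finitely generated functor by the finitely generated functor $\Im\Hom(-,u)$, and is in fact again the image of a morphism of representables, on which one computes $q(\Im\Hom(-,w))\cong\Im w$ from the left exactness of the functors $\Hom(C,-)$. Everything else is the formalism of Gabriel quotients.
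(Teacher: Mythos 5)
Your argument is correct, but it is worth noting that the paper offers no proof of this proposition at all: it is stated with a closing \(\qed\) and attributed to Auslander \cite{Au1966} (see also \cite{Le1998}). So any comparison has to be with the general strategy of the paper, whose proof of the \emph{relative} version (Proposition~\ref{pr:auslander}) runs through a calculus-of-fractions argument and the cofinality Lemma~\ref{le:cofinal}. You instead take the classical Gabriel-style route: identify the right exact extension \(q\) of \(\id_\A\) with the left adjoint of Yoneda (so exactness and \(\eff\A=\Ker q\) come for free from the paper's Lemma, parts (2) and the definition of effaceable), show the unit \(F\to\Hom(-,qF)\) has effaceable kernel and cokernel, and show representables are \(\eff\A\)-closed, so that \(Q\circ Y\) is a right adjoint of \(\bar q\) with invertible unit and counit. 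Both key computations check out: \(\Coker\eta_F\cong\Coker\Hom(-,p)\) with \(p\) epi, and \(\Ker\eta_F\cong\Coker\Hom(-,\bar u)\) with \(\bar u\colon X_1\to\Im u\) epi, which in particular shows this kernel is finitely presented without any noetherian hypothesis --- the delicate point you correctly flag. What your approach buys is a self-contained proof plus the extra structural information that \((\mod\A)/(\eff\A)\iso\A\) is realised by an adjoint pair with \(Y\) landing in the \(\eff\A\)-closed objects (essentially the section functor of the localisation); what the paper's fraction-calculus approach buys is that it relativises immediately to a subcategory \(\C\subseteq\A\), where \(\Gamma_*=\Hom(-,X)|_\C\) need not land in \(\mod\C\) and the adjoint \(q\dashv Y\) is only "partial" (cf.\ Remark~\ref{re:adjoint}), so your adjunction-based argument would not carry over verbatim to Proposition~\ref{pr:auslander}.
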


This result from \cite{Au1966} is also known as \emph{Auslander's
  formula} \cite{Le1998}. We need the following relative version.
Let  $i\colon\C\to\A$ denote the inclusion of
a full additive subcategory.  We view the induced functor
$i^*\colon\mod\C\to\mod\A$ as an inclusion and set
\[\eff(\A,\C):=\mod\C\cap\eff\A.\]
The subcategory $\C$ \emph{generates} $\A$ if every object $X\in\A$ admits an
epimorphism $C\to X$ with $C\in\C$.

\begin{prop}\label{pr:auslander}
  Let $\A$ be an abelian category and $\C\subseteq\A$ a full additive
  subcategory such that $\mod\C$ is abelian. Then $\C$ generates
  $\A$ if and only if the right exact functor $\mod\C\to\A$ extending
  the inclusion $\C\to\A$ is exact and induces an equivalence
 \[(\mod\C)/(\eff(\A,\C))\iso\A.\] 
\end{prop}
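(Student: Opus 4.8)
The plan is to deduce the relative statement from Auslander's formula by a localization argument. Write $i^*\colon\mod\C\to\mod\A$ for the fully faithful inclusion induced by $i\colon\C\to\A$, let $L\colon\mod\A\to\A$ be the exact left adjoint of the Yoneda functor $\A\to\mod\A$, and let $L_\C\colon\mod\C\to\A$ be the right exact functor extending $i$. Because $L$ is right exact and restricts to $i$ on representable functors $\Hom_\A(-,C)$ with $C\in\C$, which agree with the representables $\Hom_\C(-,C)$ after applying $i^*$, the two functors $L\circ i^*$ and $L_\C$ agree on representables and both are right exact, hence $L\circ i^*\cong L_\C$. In particular the kernel of $L_\C$ is exactly $\mod\C\cap\eff\A=\eff(\A,\C)$, and since $\eff\A$ is a Serre subcategory of $\mod\A$, the intersection $\eff(\A,\C)$ is a Serre subcategory of $\mod\C$ (here we use that $\mod\C$ is abelian and the inclusion $i^*$ is exact, which it is because it is induced by a functor on additive categories and preserves the presentations; more precisely $i^*$ is fully faithful and sends the cokernel presentation \eqref{eq:F} to the analogous one in $\mod\A$). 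So $L_\C$ always factors through an exact functor $\bar L_\C\colon(\mod\C)/\eff(\A,\C)\to\A$.

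Now suppose $\C$ generates $\A$. First I would check that $\bar L_\C$ is essentially surjective: every $X\in\A$ receives an epimorphism $C\twoheadrightarrow X$ with $C\in\C$, and $L_\C$ sends the representable $\Hom_\C(-,C)$ to $C$ and preserves cokernels, so $X$ lies in the essential image. Next, faithfulness and fullness: by the universal property of the Serre quotient, $\bar L_\C$ is fully faithful iff for every $F,G\in\mod\C$ the induced map $\Hom_{\mod\C}(F,G)\to\Hom_\A(L_\C F, L_\C G)$ becomes an isomorphism after inverting maps with kernel and cokernel in $\eff(\A,\C)$. Here the key tool is that $L$ itself induces the equivalence $(\mod\A)/\eff\A\iso\A$ (Auslander's formula), together with the observation that a functor in $\mod\A$ lying in $\eff(\A,\C)$ is, by generation, precisely a functor whose presenting epimorphism can be taken from $\C$; one shows that every object of $\mod\A$ is, modulo $\eff\A$, isomorphic to one coming from $\mod\C$ when $\C$ generates $\A$, so the inclusion $(\mod\C)/\eff(\A,\C)\hookrightarrow(\mod\A)/\eff\A$ is an equivalence, and composing with Auslander's equivalence gives $\bar L_\C$. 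To prove exactness of $L_\C$ itself (not just of the induced quotient functor), I would argue that a short exact sequence in $\mod\C$, which by hypothesis is abelian, maps under $i^*$ to a short exact sequence in $\mod\A$ — using that $i^*$ is exact — and then apply exactness of $L$.

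Conversely, if $L_\C$ is exact and $\bar L_\C$ is an equivalence, then in particular $L_\C$ is essentially surjective, so every $X\in\A$ is a quotient of some $L_\C F$; resolving $F$ by representables $\Hom_\C(-,C_1)\to\Hom_\C(-,C_0)\to F\to 0$ and applying the right exact $L_\C$ gives an epimorphism $C_0\twoheadrightarrow L_\C F\twoheadrightarrow X$ with $C_0\in\C$, which is exactly the statement that $\C$ generates $\A$.

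The step I expect to be the main obstacle is showing that the inclusion $(\mod\C)/\eff(\A,\C)\to(\mod\A)/\eff\A$ is an equivalence (equivalently, that every coherent functor on $\A$ agrees modulo effaceables with one induced from $\C$): this is where the generation hypothesis must be used in an essential way, presenting an arbitrary $F\in\mod\A$ via an epimorphism $Y\twoheadrightarrow$ (image) and then replacing $Y$ and its kernel's cover by objects of $\C$ using \textbf{(A1)}-style generation, all while keeping track of what happens modulo $\eff\A$. Everything else is a formal consequence of Auslander's formula, the universal property of Serre quotients, and the exactness of $L$ and of $i^*$.
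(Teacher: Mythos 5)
Your overall route is the same as the paper's: factor the right exact functor as $\mod\C\xto{i^*}\mod\A\twoheadrightarrow\A$, invoke Auslander's formula for the second functor, and reduce fully faithfulness of the induced functor on quotients to a cofinality statement handled by the calculus of fractions. However, there is a genuine error at the start. You claim that $i^*$ is exact unconditionally ``because it preserves the presentations'', and you conclude that $L_\C=L\circ i^*$ is \emph{always} exact and \emph{always} factors through an exact $\bar L_\C$ on $(\mod\C)/\eff(\A,\C)$. Preserving presentations only gives right exactness. Left exactness of $i^*$ fails in general: kernels in $\mod\C$ are computed from weak kernels in $\C$, and these need not remain exact when viewed in $\A$ unless $\C$ generates. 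For a concrete counterexample take $\A=\mod\La$ for the path algebra of the quiver $1\to 2$ and $\C=\add(P_1\oplus S_1)$ (which does not generate, since $S_2$ is not a quotient of any object of $\C$): the morphism $\Hom_\C(-,P_1)\to\Hom_\C(-,S_1)$ is a monomorphism in $\mod\C$, but evaluating at $S_2$ shows that $i^*$ does not send it to a monomorphism in $\mod\A$, and $L_\C$ sends it to the non-injective map $P_1\to S_1$. So exactness of $L_\C$ is part of what must be proved, and in the forward direction it has to be deduced from the generation hypothesis (the paper does this by observing that weak kernel sequences in $\C$ become exact sequences in $\A$ when $\C$ generates). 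Your argument as written never makes this deduction, and the grounds you give for it are false.

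The second problem is that the step you yourself flag as ``the main obstacle'' --- that every $F\in\mod\A$ admits a morphism $F'\to F$ with $F'\in\mod\C$ which becomes an isomorphism under $\mod\A\twoheadrightarrow\A$ --- is exactly the substantive content of the proposition (the paper's Lemma~\ref{le:cofinal}), and you only sketch it. The proof is short but must be given: choose, using generation, a commutative diagram with exact rows $C_1\xto{\g} C_0\to X\to 0$ and $D_1\xto{\d} D_0\to Y\to 0$ over the presenting morphism $X\to Y$ of $F$, with all $C_i,D_i\in\C$; then the induced map $\Coker\Hom(-,\g)\to\Coker\Hom(-,\d)$ has a cokernel $F'\in\mod\C$ mapping to $F$, and since the two left-hand vertical comparison maps become isomorphisms in $\A$, so does $F'\to F$. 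Without this argument (and with the exactness issue above repaired), the fully-faithfulness half of the forward direction is incomplete; the converse direction of your proposal (essential surjectivity forces generation, via a presentation of $F$ by representables) is fine and agrees with the paper.
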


\begin{proof}
  We write the right exact functor $\mod\C\to\A$ as composite
  \[\mod\C\xto{i^*}\mod\A\twoheadrightarrow\A\]
  with $i^*$ induced by the inclusion $i\colon\C\to\A$. The functor $i^*$ is fully
  faithful, and  it is exact when any weak kernel sequence in $\C$ is exact
  in $\A$. The latter property follows when $\C$ generates
  $\A$. Moreover, the condition that $\C$ generates is equivalent to
  the property of  $\mod\C\to\A$  to be essentially surjective.

  Now suppose that $\C$ generates $\A$. Then the exact functor $i^*$
  induces a functor
\[(\mod\C)/(\eff(\A,\C))\to (\mod\A)/(\eff\A)\] by the definition of
$\eff(\A,\C)$. The morphisms in the
localised categories are computed via a calculus of fractions. From
this it follows that the functor is fully faithful when the following
cofinality condition holds: Every $F\in\mod\A$ admits a morphism
$F'\to F$ such that $F'\in\mod\C$ and the image under
$\mod \A\twoheadrightarrow\A$ is an isomorphism; cf.\
\cite[Lemma~1.2.5]{Kr2021}. For this cofinality see Lemma~\ref{le:cofinal} below,
and it remains to compose this functor with the
equivalence $(\mod\A)/(\eff\A)\iso\A$.
\end{proof}

\begin{lem}\label{le:cofinal}
  Let $\A$ be an abelian category and $\C\subseteq\A$ a full additive
  subcategory such that $\mod\C$ is abelian and $\C$
  generates $\A$. Then every $F\in\mod\A$ admits a morphism $F'\to F$
  such that $F'\in\mod\C$ and the image under
  $\mod \A\twoheadrightarrow\A$ is an isomorphism.
\end{lem}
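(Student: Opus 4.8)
The plan is to take an arbitrary $F\in\mod\A$ with presentation $\Hom_\A(-,X)\to\Hom_\A(-,Y)\to F\to 0$ and build $F'\in\mod\C$ together with a map $F'\to F$ whose image in $\A$ is an isomorphism. First I would use that $\C$ generates $\A$ to choose epimorphisms $C_X\to X$ and $C_Y\to Y$ with $C_X,C_Y\in\C$; after forming the pullback of $C_Y\to Y$ along $X\to Y$ (and then composing with $C_X\to X$, or rather lifting along the pullback), one arranges a commutative square with a morphism $g\colon C_X'\to C_Y$ in $\C$ lying over the presenting morphism $\p\colon X\to Y$, with $C_X'\to X$ still an epimorphism. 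Concretely: pull back $C_Y\twoheadrightarrow Y$ along $\p$ to get $P\twoheadrightarrow X$, then precompose with an epimorphism $C\twoheadrightarrow P$ with $C\in\C$ to obtain $C\twoheadrightarrow X$; the composite $C\to P\to C_Y$ is then a morphism $g$ in $\C$ sitting over $\p$. Set $F':=\Coker\big(\Hom_\A(-,C)\xto{\Hom(-,g)}\Hom_\A(-,C_Y)\big)$, which lies in $\mod\C$ by construction, and the Yoneda map $C\to C_Y$ over $\p$ induces a canonical morphism $F'\to F$.

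Next I would identify the image of $F'\to F$ under the localization $L\colon\mod\A\twoheadrightarrow\A$ (the left adjoint of Yoneda, which sends a coherent functor with presenting morphism $\a$ to $\Coker\a$). Since $L$ is right exact and $L(\Hom_\A(-,Z))=Z$, we get $L(F)=\Coker\p$ and $L(F')=\Coker g$. So the induced map $L(F')\to L(F)$ is the canonical map $\Coker g\to\Coker\p$ coming from the commuting square with vertical epimorphisms $C\twoheadrightarrow X$ and $C_Y\to Y$ (the latter need not be epi a priori, but we may also have chosen $C_Y\twoheadrightarrow Y$ epi at the start). The key point is then: given a commutative square
\[\begin{tikzcd}
C\arrow[r,"g"]\arrow[d,twoheadrightarrow,"u"]&C_Y\arrow[d,"v"]\\
X\arrow[r,"\p"]&Y
\end{tikzcd}\]
with $u$ an epimorphism and $v$ an epimorphism, the induced map $\Coker g\to\Coker\p$ is an isomorphism. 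This is a routine diagram chase in the abelian category $\A$: surjectivity of $\Coker g\to\Coker\p$ follows from surjectivity of $v$, and injectivity follows because any element of $C_Y$ mapping into $\Im\p$ can, using surjectivity of $u$ onto $X$ and a small pullback argument (or the snake lemma applied to the map of short exact sequences), be corrected by an element of $\Im g$.

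The only real choice to get right is ensuring $v=(C_Y\to Y)$ is an epimorphism \emph{and} that the square commutes with $u$ epi; the pullback construction above delivers exactly this, since in the pullback $P=C_Y\times_Y X$ the projection $P\to X$ is an epimorphism (pullback of the epimorphism $C_Y\twoheadrightarrow Y$), and further covering $P$ by an object of $\C$ keeps the composite to $X$ epic while producing the needed map to $C_Y$ in $\C$. I expect the main obstacle to be purely bookkeeping: one must be slightly careful that $C$ is chosen in $\C$ at the right stage (covering the pullback $P$, not $X$ directly) so that $g$ genuinely lands in $\mod\C$, and then the verification that $\Coker g\xrightarrow{\sim}\Coker\p$ is the elementary diagram lemma just described. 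No deeper input is needed beyond "$\C$ generates $\A$" and the basic formalism of coherent functors recalled in the preceding lemma.
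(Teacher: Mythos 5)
Your construction is essentially sound and is a slightly more hands-on variant of the paper's argument (the paper lifts entire two-term presentations $C_1\to C_0\to X\to 0$ and $D_1\to D_0\to Y\to 0$ by objects of $\C$ and then concludes formally from exactness of $\mod\A\twoheadrightarrow\A$), but the justification of your final step contains a genuine error. The ``key point'' you state --- that for any commutative square with $u\colon C\to X$ and $v\colon C_Y\to Y$ epimorphisms the induced map $\Coker g\to\Coker\p$ is an isomorphism --- is false. Take $C=0$, $X=0$, $\p=0\colon 0\to Y$, $C_Y=Y\oplus Y$ and $v=\smatrix{1&1}$: both vertical maps are epimorphisms, yet the induced map $\Coker g=Y\oplus Y\to Y=\Coker\p$ is $v$, which is not a monomorphism. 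Concretely, your injectivity argument only produces, for an element $c$ of $C_Y$ with $v(c)=\p(x)$, some $c'$ in $C$ with $u(c')=x$; then $c-g(c')$ lies in $\Ker v$, and without the extra condition $\Ker v\subseteq\Im g$ you cannot conclude that $c$ dies in $\Coker g$. Likewise the snake lemma applied to $0\to\Ker u\to C\to X\to 0$ and $0\to\Ker v\to C_Y\to Y\to 0$ yields an isomorphism on cokernels only if the induced map $\Ker u\to\Ker v$ is an epimorphism, which does not follow from $u$ and $v$ being epimorphisms.

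The gap is reparable, and your own construction supplies the missing ingredient --- just not for the reason you give. Because $C$ is chosen to cover the pullback $P=C_Y\times_Y X$, the pair map $C\to P$ is an epimorphism; hence any (generalized) element pair $(c,x)$ with $v(c)=\p(x)$ lifts to $C$, giving $c=g(c')$ directly. Equivalently, $\Ker u\to\Ker v$ is an epimorphism: it is the base change of $C\twoheadrightarrow P$ along the inclusion of the kernel of the projection $P\to X$, composed with the isomorphism of that kernel with $\Ker v$; with this the snake lemma does finish the proof. So the role of covering $P$ rather than $X$ is not merely, as you say, to make $g$ land in $\C$; it is exactly what makes $\Coker g\to\Coker\p$ injective, and you should state this exactness property of the square as the hypothesis of your key lemma. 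With that correction your argument is a valid proof, running parallel to the paper's.
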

\begin{proof}
Let $X\to Y$ be the morphism presenting $F$. We find objects
$C_i,D_i$ in $\C$ and morphisms such that the following diagram in $\A$
commutes and has exact rows.
 \[\begin{tikzcd}
C_1\arrow{r}{\g}\arrow{d}&C_0\arrow{r}\arrow{d}&X\arrow{r}\arrow{d}&0 \\
D_1\arrow{r}{\d}&D_0\arrow{r}&Y\arrow{r}&0 
\end{tikzcd}\]
This induces the following commutative diagram with
exact rows in $\mod\A$.
 \[\begin{tikzcd}
\Coker\Hom(-,\g)\arrow{r}\arrow{d}&\Coker\Hom(-,\d)\arrow{r}\arrow{d}&F'\arrow{r}\arrow{d}&0 \\
\Hom(-,X)\arrow{r}&\Hom(-,Y)\arrow{r}&F\arrow{r}&0 
\end{tikzcd}\] The top row lies in $\mod\C$. By construction the
vertical morphisms on the left and in the middle are mapped to
isomorphisms under $\mod \A\twoheadrightarrow\A$. It follows that
$F'\to F$ has the desired properties.
\end{proof}

\begin{rem}
  One may think of the relative Auslander formula as a variation of
  the Popescu--Gabriel theorem \cite{PG1964} which says for a
  Grothendieck category $\A$ and a generator $G$ that the left adjoint
  of $\Hom(G,-)\colon\A\to\Mod A$ with $A=\End(G)$ induces an
  equivalence \[(\Mod A)/\L\iso\A\] for some appropriate localising
  subcategory $\L\subseteq\Mod A$. In fact, we may take for
  $\C\subseteq\A$ the full subcategory all coproducts of copies of
  $G$. Then the colimit preserving composite
  $\Mod A\to \mod\C\to\A$ that identifies $A$ with $G$ induces equivalences
\[(\Mod A)/\L\iso (\mod\C)/(\eff(\A,\C))\iso\A.\]
\end{rem}

\begin{rem}\label{re:adjoint}
For a full additive subcategory $\C\subseteq\A$ the right exact functor $\mod\C\to\A$
admits a right adjoint if and only if the subcategory $\C$ is
\emph{contravariantly finite}, so each object $X\in\A$ admits a
morphism $\pi\colon C\to X$ such that $C\in\C$ and each morphism
$C'\to X$ with $C'\in\C$ factors through $\pi$. This condition means
that $\Hom(-,X)|_\C$ belongs to $\mod\C$ for all $X\in\A$, so
$X\mapsto \Hom(-,X)|_\C$ provides the right adjoint.
\end{rem}

\section{Serre's theorem via Auslander's techniques}

We use the relative Auslander formula from
Proposition~\ref{pr:auslander} to prove Theorem~\ref{th:serre}. We
need some preparations and fix an additive category $\C$ that is Krull--Schmidt.  Let
$\mod_0\C$ denote the category of all additive functors $\C^\op\to\Ab$
that are of finite length. For an object $X\in\C$
set \[S_X=\Hom(-,X)/\Rad(-,X).\] We note that $S_X$ is simple when
$\End(X)$ is local, and $S_X\cong\bigoplus_iS_{X_i}$ for any finite
decomposition $X=\bigoplus_i X_i$.

Given an indecomposable object $X$, a radical morphism $X'\to X$ is
called \emph{right almost split} if all radical morphisms terminating
at $X$ factor through $X'\to X$. Simple functors and their connection
to almost split morphisms are discussed in great detail in
\cite[Chapter~II]{Au1976}. In our context the following is needed.

\begin{lem}\label{le:almost-split}
  Let $\C$ be a Krull--Schmidt category. Then
  $\mod_0\C\subseteq\mod\C$ holds if and only if every indecomposable
  object in $\C$ admits a right almost split morphism.
\end{lem}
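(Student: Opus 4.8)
The plan is to prove the equivalence
\[
\mod_0\C\subseteq\mod\C \iff \text{every indecomposable in $\C$ has a right almost split morphism}
\]
by analysing simple functors, which generate $\mod_0\C$ as a Serre subcategory. First I would observe that $\mod_0\C$ is the Serre subcategory of $\mod\C$ generated by the simple functors, so $\mod_0\C\subseteq\mod\C$ holds if and only if every simple functor $F$ (with the finite-length property) lies in $\mod\C$. By Krull--Schmidt and the decomposition $S_X\cong\bigoplus_i S_{X_i}$, every simple functor is isomorphic to $S_X$ for a unique indecomposable $X$ with local endomorphism ring, so the condition reduces to: $S_X\in\mod\C$ for every indecomposable $X\in\C$.

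Next I would establish the key dictionary between finite presentations of $S_X$ and right almost split morphisms. Given an indecomposable $X$, note that $S_X=\Coker(\Rad(-,X)\hookrightarrow\Hom(-,X))$, and $\Hom(-,X)\to S_X\to 0$ is the start of a presentation; a finite presentation of $S_X$ amounts to a finitely generated projective cover of the subfunctor $\Rad(-,X)\subseteq\Hom(-,X)$, i.e.\ an object $X'\in\C$ and a morphism $f\colon X'\to X$ with image $\Hom(-,f)$ generating $\Rad(-,X)$. Unwinding: $\Hom(-,f)$ has image $\Rad(-,X)$ precisely when $f$ is a radical morphism through which every radical morphism $Z\to X$ factors — that is, $f$ is right almost split. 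Conversely, if $f\colon X'\to X$ is right almost split, then $\Hom(-,X')\xrightarrow{\Hom(-,f)}\Hom(-,X)\to S_X\to 0$ is exact, so $S_X$ is finitely presented. So for each fixed indecomposable $X$, $S_X\in\mod\C$ iff $X$ admits a right almost split morphism.

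Combining the two reductions gives the equivalence. For the forward direction: if $\mod_0\C\subseteq\mod\C$ then each $S_X\in\mod\C$ for $X$ indecomposable, hence each indecomposable $X$ has a right almost split morphism. For the converse: if every indecomposable has a right almost split morphism then every $S_X\in\mod\C$; since $\mod\C$ is closed under cokernels and extensions within $\mod\C$ (being closed under extensions is the subtle point — I would note that an extension of finitely presented functors is again finitely presented, which holds in $\mod\C$ by the standard horseshoe argument since $\mod\C$ has enough projectives and cokernels), every finite-length functor, built from finitely many copies of simples by finitely many extensions, lies in $\mod\C$, giving $\mod_0\C\subseteq\mod\C$.

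I expect the main obstacle to be the bookkeeping in the converse direction: verifying that being a finitely presented functor is preserved under extensions, so that the simples actually generate all of $\mod_0\C$ inside $\mod\C$. One must be careful because $\mod\C$ need not be abelian (only closed under cokernels), yet it is always closed under extensions and subobjects-that-are-cokernels, which is exactly enough; the argument is the functor-category analogue of the fact that finitely presented modules over an arbitrary ring are closed under extensions. A secondary point requiring care is the precise translation between "$\Hom(-,f)$ generates the subfunctor $\Rad(-,X)$" and the factorisation property defining right almost split morphisms, including the fact that $f$ itself must be a radical morphism (equivalently, $X'$ has no direct summand mapping isomorphically onto $X$), which follows since $X$ is indecomposable with local endomorphism ring.
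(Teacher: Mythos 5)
Your proposal is correct and follows essentially the same route as the paper: reduce to simple functors, which by the Krull--Schmidt property are exactly the $S_X$ for $X$ indecomposable, and then use the dictionary that $S_X\in\mod\C$ amounts to an epimorphism $\Hom(-,X')\to\Rad(-,X)$, i.e.\ a right almost split morphism $X'\to X$. The only difference is that you spell out the closure of $\mod\C$ under extensions for the converse direction, which the paper leaves implicit.
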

\begin{proof}
  A functor $F\colon\C^\op\to\Ab$ is simple if and only if
  $F\cong S_X$ for some indecomposable object $X\in\C$; see
  \cite[Proposition~II.1.8]{Au1976}.  Here one uses that $\C$ is
  Krull--Schmidt.  On the other hand, for an indecomposable object
  $X\in\C$ there exists a right almost split morphism $X'\to X$ if
  and only if $S_X$ belongs to $\mod\C$, because a right almost split morphism
  $X'\to X$ amounts to an epimorphism $\Hom(-,X')\to\Rad(-,X)$.
\end{proof}

\begin{lem}\label{le:semisimple}
  Let $\C$ be a Krull--Schmidt category such that $\mod\C$ is
  abelian and $\mod_0\C\subseteq\mod\C$. A functor $F\in\mod\C$
  with presentation \eqref{eq:F}
  belongs to $\mod_0\C$ if and only if the
  morphism $X\to Y$ in $\C$ has finite length.
\end{lem}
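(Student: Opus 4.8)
The plan is to prove Lemma~\ref{le:semisimple} by relating the finite length of a morphism $\p\colon X\to Y$ in $\C$ to the finite length of the coherent functor $F=\Coker\Hom(-,\p)$. First I would recall that, since $\mod\C$ is abelian, the category $\mod\C$ has a well-behaved Yoneda embedding $\C\hookrightarrow\mod\C$ and every functor has a two-term projective presentation by representables; the composition series of $F$ (when it exists) has simple factors among the $S_Z$ with $Z\in\C$ indecomposable, by \cite[Proposition~II.1.8]{Au1976} together with the hypothesis $\mod_0\C\subseteq\mod\C$. The key bookkeeping device is the connection between the $\Rad^n$-filtration of morphisms and the radical filtration of representable functors: a morphism $X'\to Y$ lies in $\Rad^n\C$ precisely when the corresponding map $\Hom(-,X')\to\Hom(-,Y)$ factors through $\rad^n\Hom(-,Y)$ (the $n$-th power of the Jacobson radical of $\mod\C$ evaluated at $Y$), and $\rad^n\Hom(-,Y)$ is exactly the image of $\Hom(-,Y_n)$ for a suitable $Y_n$ obtained by iterating right almost split morphisms over the indecomposable summands of $Y$.

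Next I would unwind the definition of ``length of $X\to Y$ bounded by $n$'': it says every morphism $X'\to Y$ in $\Rad^n\C$ factors through $\p$, equivalently the image of $\Hom(-,\p)$ contains $\rad^n\Hom(-,Y)$, equivalently the cokernel $F$ is a quotient of $\Hom(-,Y)/\rad^n\Hom(-,Y)$. Now $\Hom(-,Y)/\rad^n\Hom(-,Y)$ has a finite filtration with semisimple subquotients $\rad^k/\rad^{k+1}$, each of which is a finite direct sum of copies of simples $S_{Y_i}$ provided these quotients themselves lie in $\mod\C$ — and that is guaranteed by $\mod_0\C\subseteq\mod\C$ applied inductively, using the existence of right almost split morphisms from Lemma~\ref{le:almost-split}. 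Hence $\Hom(-,Y)/\rad^n\Hom(-,Y)\in\mod_0\C$, so any quotient of it — in particular $F$ — also lies in $\mod_0\C$. This gives the direction: if $\p$ has finite length, then $F\in\mod_0\C$.

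For the converse, suppose $F=\Coker\Hom(-,\p)\in\mod_0\C$. Then $F$ has finite length, and since each simple functor $S_Z$ is annihilated by the radical, a functor of length $\ell$ is annihilated by $\rad^\ell$; concretely $\rad^\ell F=0$, which means the surjection $\Hom(-,Y)\twoheadrightarrow F$ kills $\rad^\ell\Hom(-,Y)$, i.e.\ $\rad^\ell\Hom(-,Y)\subseteq\Im\Hom(-,\p)$. Translating back along the Yoneda correspondence, every morphism $X'\to Y$ in $\Rad^\ell\C$ factors through $\p$, which is precisely the statement that $\p$ has length bounded by $\ell$. Thus $\p$ has finite length, completing the equivalence.

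The main obstacle I expect is the careful identification $\rad^n\Hom(-,Y)=\Im\Hom(-,Y_n)$ and the verification that the semisimple layers $\rad^k\Hom(-,Y)/\rad^{k+1}\Hom(-,Y)$ actually belong to $\mod_0\C$ rather than merely being finite-length functors in the big functor category — this is where the hypothesis $\mod_0\C\subseteq\mod\C$, and hence the existence of right almost split morphisms via Lemma~\ref{le:almost-split}, does the real work. One must iterate: starting from $Y$ with its right almost split morphism $Y'\to Y$, the kernel/image analysis produces the next layer, and finite-presentedness must be maintained at each step. Once that structural fact is in hand, both implications are short translations between the ideal-power filtration on morphisms of $\C$ and the radical-power filtration on representable objects of $\mod\C$.
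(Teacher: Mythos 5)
Your argument is correct and essentially the paper's own proof: both rest on Auslander's identification $\rad^n\Hom(-,Y)=\Rad^n(-,Y)$, on the right almost split morphisms supplied by $\mod_0\C\subseteq\mod\C$ (Lemma~\ref{le:almost-split}) to get finite-length semisimple radical layers, and on translating `length bounded by $n$' into a statement about radical powers. The only cosmetic difference is that you run the radical filtration on $\Hom(-,Y)$ and pass to the quotient, whereas the paper filters $F$ itself and characterises finite length by $\rad^nF=0$ for $n\gg 0$.
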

\begin{proof}
  We write $\rad F$ for the intersection of all maximal subobjects of
  $F$ and set $\rad^{n+1}F=\rad(\rad^n F)$ for all $n\ge 0$.  For
  $H_X=\Hom(-,X)$ observe that $\rad^n H_X=\Rad^n(-,X)$ for all
  $n\ge 0$; see \cite[Proposition~II.1.8]{Au1976}. Thus the presentation of $F$ induces an epimorphism
  $S_Y\to F/(\rad F)$.  Then the radical filtration
\[\cdots\subseteq\rad^2 F\subseteq\rad^1 F\subseteq\rad^0 F=F\]
lies in $\mod\C$ and each subquotient \[(\rad^n F)/(\rad^{n+1}F)\] has
finite length. It follows that $F$ has finite length if and only if
$\rad^n F=0$ for $n\gg 0$. The presenting morphism $X\to Y$ has by definition
length at most $n$ if and only if the epimorphism $H_Y\to F$ factors
through $H_Y\to H_Y/(\rad^n H_Y)$. On the other hand,  $H_Y\to F$ maps
$\rad^n H_Y$ onto $\rad^n F$, and therefore it factors
through $H_Y\to H_Y/(\rad^n H_Y)$ if and only
if $\rad^n F=0$.
\end{proof}

\begin{lem}\label{le:serre}
  Let $\A$ be an abelian category and $\C\subseteq\A$ a full additive
  subcategory.  Suppose that $\mod\C$ is abelian and that
  $\mod_0\C\subseteq\mod\C$. Then the right exact functor
  $\mod\C\to\A$ extending the inclusion $\C\to\A$ is exact and induces
  an equivalence
  \[(\mod\C)/(\mod_0\C)\iso\A\] if and only if $\C$ generates $\A$ and $\mod_0\C=\eff(\A,\C)$.
\end{lem}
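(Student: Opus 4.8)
The plan is to deduce Lemma~\ref{le:serre} from the relative Auslander formula in Proposition~\ref{pr:auslander}. That proposition already tells us that ``$\C$ generates $\A$'' is equivalent to ``the right exact functor $\mod\C\to\A$ is exact and induces an equivalence $(\mod\C)/(\eff(\A,\C))\iso\A$''. So the only remaining task is to compare the Serre subcategory $\eff(\A,\C)$ with the Serre subcategory $\mod_0\C$ appearing in the statement, granted that $\C$ generates $\A$. This is where the hypothesis $\mod_0\C\subseteq\mod\C$ and Lemma~\ref{le:semisimple} enter.

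First I would dispose of the ``if'' direction: assume $\C$ generates $\A$ and $\mod_0\C=\eff(\A,\C)$. Then Proposition~\ref{pr:auslander} immediately gives that $\mod\C\to\A$ is exact and induces an equivalence $(\mod\C)/(\mod_0\C)=(\mod\C)/(\eff(\A,\C))\iso\A$.

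For the ``only if'' direction, assume $\mod\C\to\A$ is exact and $(\mod\C)/(\mod_0\C)\iso\A$. The composite $\C\to\mod\C\twoheadrightarrow(\mod\C)/(\mod_0\C)\iso\A$ agrees with the inclusion $\C\hookrightarrow\A$, so in particular it is essentially surjective; since the first functor $\C\to\mod\C$ is Yoneda and every object of $\mod\C$ is a cokernel of a map between representables, essential surjectivity of $\mod\C\to\A$ forces $\C$ to generate $\A$ (equivalently, invoke the last sentence of the first paragraph of the proof of Proposition~\ref{pr:auslander}: essential surjectivity of $\mod\C\to\A$ is equivalent to $\C$ generating $\A$). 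Now that $\C$ generates, Proposition~\ref{pr:auslander} also produces an equivalence $(\mod\C)/(\eff(\A,\C))\iso\A$ compatible with $\mod\C\to\A$, so both $\mod_0\C$ and $\eff(\A,\C)$ are exactly the kernel of the exact functor $\mod\C\to\A$; hence $\mod_0\C=\eff(\A,\C)$.

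The one point that deserves care — and the main obstacle — is why $\mod_0\C$, under these hypotheses, really coincides with the kernel of $\mod\C\to\A$. This is where Lemma~\ref{le:semisimple} is used: a functor $F\in\mod\C$ with presenting morphism $X\to Y$ lies in $\mod_0\C$ iff $X\to Y$ has finite length, whereas it lies in $\eff\A$ (equivalently in $\ker(\mod\C\to\A)$) iff $X\to Y$ is an epimorphism in $\A$. So strictly speaking the equality $\mod_0\C=\eff(\A,\C)$ is an extra output rather than something one must separately verify; the clean statement is that \emph{under the hypothesis $\mod_0\C\subseteq\mod\C$}, the localisation $(\mod\C)/(\mod_0\C)$ is equivalent to $\A$ via $\mod\C\to\A$ precisely when $\C$ generates $\A$ and these two Serre subcategories agree, and the argument above shows each half of the biconditional. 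I would therefore present the proof as: (i) cite Proposition~\ref{pr:auslander} to reduce to the comparison of $\mod_0\C$ with $\eff(\A,\C)$; (ii) observe both directions follow formally once ``$\C$ generates $\A$'' is available, using that both subcategories are the kernel of the same exact functor. No grinding is needed beyond recalling that a full exact localisation functor has the stated kernel description.
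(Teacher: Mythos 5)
Your proposal is correct and follows essentially the paper's route: the paper's own proof is just ``an immediate consequence of Proposition~\ref{pr:auslander}'', and your spelling-out — reduce via Proposition~\ref{pr:auslander}, note that essential surjectivity of $\mod\C\to\A$ is equivalent to $\C$ generating $\A$, and then identify both $\mod_0\C$ and $\eff(\A,\C)$ with the kernel of the same exact functor inducing a Serre-quotient equivalence — is exactly the intended argument. (Your side remark that Lemma~\ref{le:semisimple} is needed here is superfluous — that lemma requires a Krull--Schmidt hypothesis absent from this statement and is only used later for Theorem~\ref{th:serre} — but you correctly end up not relying on it.)
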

\begin{proof}
The assertion is an immediate consequence of Proposition~\ref{pr:auslander}.
\end{proof}

We are now ready to prove Theorem~\ref{th:serre}. 

\begin{proof}[Proof of Theorem~\ref{th:serre}]
  We recall the equivalences
  \[\C\iso\grproj A\qquad\text{and}\qquad\mod\C\iso\grmod A.\] Our
  assumption $\grmod_0 A\subseteq\grmod A$ implies that every
  indecomposable object in $\C$ admits a right almost split morphism;
  see Lemma~\ref{le:almost-split}.

  Now we apply Lemma~\ref{le:serre} and need to show that conditions
  (A1)--(A2) in Theorem~\ref{th:serre} are equivalent to the
  conditions in Lemma~\ref{le:serre}. This is clear for the first pair
  of conditions expressing the fact that $\C$ is generating $\A$.  It
  follows from Lemma~\ref{le:semisimple} that (A2) holds if and only
  if
  \[\eff(\A,\C)=\mod\C\cap\eff\A\subseteq\mod_0\C.\]
  For the other inclusion it suffices that all simple objects in
  $\mod\C$ belong to $\eff(\A,\C)$. This means that for every
  indecomposable $X\in\C$
  the right almost split morphism $X'\to X$ is an
  epimorphism. But this is precisely the extra condition in (A1) that
  there is an epimorphism $X'\to X$ in $\Rad\C$.
\end{proof}

\section{Hereditary algebras}

Let $K$ be a field and $\La$ a hereditary finite dimensional
$K$-algebra. This means $\Ext_\La^n(-,-)=0$ for all $n>1$. We consider
the category $\mod\La$ of finitely presented $\La$-modules. This
category admits a canonical decomposition. For an additive category
$\C$ we use the notation $\C=\bigvee_i\C_i$ and call this a
\emph{decomposition} when each $\C_i$ is a full additive subcategory
such that each object in $\C$ can be written as a coproduct
$\coprod_i C_i$ with $C_i\in\C_i$ for all $i$, and
$\C_i\cap\big( \bigvee_{j\neq i}\C_j\big)=0$ for all $i$.

Assume that $\La$ is connected and of infinite
representation type. Then there is a decomposition
\[\mod \La=\P\vee\R\vee\I \]
where $\P$ denotes the full subcategory of \emph{preprojective}
$\La$-modules, $\R$ denotes the full subcategory of \emph{regular}
$\La$-modules, and $\I$ denotes the full subcategory of
\emph{postinjective} $\La$-modules \cite[VIII]{ARS1995}. Because
$\A=\mod\La$ is a hereditary abelian category we have a decomposition
of the bounded derived category
\[\bfD^{\mathrm b}(\A)=\bigvee_{n\in\bbZ}\A[n]\]
where $\A[n]$ denotes the full subcategory of complexes with cohomology
concentrated in degree $-n$. Note that this category has Serre duality
which extends the Auslander--Reiten duality for $\A$. Thus there is a
functor $\t\colon \bfD^{\mathrm b}(\A)\iso \bfD^{\mathrm b}(\A)$ such that for all objects
$X,Y$ there is a natural isomorphism
\[D\Hom(X,Y[1])\cong\Hom(Y,\t X)\]
where $D=\Hom_K(-,K)$; see \cite{Kr2021} for details.

In \cite{Le1986} Lenzing proposes a geometric approach and introduces
the following full additive subcategory
\[\H:=\I [-1]\vee\P[0]\vee\R[0]\subseteq\bfD^{\mathrm b}(\mod\La).\]
He shows that $\H$ is a hereditary abelian category with Serre duality
given by the Auslander--Reiten translate $\t\colon\H\iso\H$. One way
of seeing this is that the torsion pair $(\I,\P\vee\R)$ for $\A$
induces a t-structure on $\bfD^{\mathrm b}(\A)$ such that $\H$ identifies with its
heart \cite{HRS1996}. Moreover, $\La\in\P[0]$ is a tilting object and $\RHom(\La,-)$
provides a triangle equivalence \[\bfD^{\mathrm b}(\H)\iso\bfD^{\mathrm b}(\mod\La).\] The
\emph{preprojective algebra} of $\La$ is the orbit algebra
\[\Pi:=\bigoplus_{n\ge 0}\Hom(\La,\t^{-n}\La)\] and the following
theorem is implicit in Lenzing's geometric treatment of hereditary
algebras; cf.\ Theorem~4.10 in \cite{Le1986}. We deduce this from
Theorem~\ref{th:serre}. Note that $\Pi$ is noetherian if and only if
$\La$ is of tame type \cite{Ba1985,BGL1987}.

\begin{thm}\label{th:lenzing}
For a connected hereditary algebra $\La$ of infinite representation type  the
  assignment \[X\mapsto\bigoplus_{n\in\bbZ}\Hom(\La,\t^{-n}X)\]
  induces an equivalence
 \[\H\iso(\grmod\Pi)/(\grmod_0\Pi).\] 
\end{thm}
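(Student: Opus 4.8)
The plan is to deduce Theorem~\ref{th:lenzing} from Theorem~\ref{th:serre} by taking the triple $(\A,C,\s)=(\H,\La,\t^{-1})$, where $\H=\I[-1]\vee\P[0]\vee\R[0]$ and $\t$ is the Auslander--Reiten translate viewed as an autoequivalence of $\H$. First I would check the hypotheses of Theorem~\ref{th:serre}. The orbit algebra of $\La$ under $\t^{-1}$ is by definition the preprojective algebra $\Pi=\bigoplus_{n\ge 0}\Hom(\La,\t^{-n}\La)$, so I must verify: (i) $\Hom_\H(\La,\t^{-n}\La)=0$ for $n<0$, i.e.\ $\Hom_\H(\La,\t^{m}\La)=0$ for $m>0$; (ii) $\Pi$ is right coherent and semiperfect; and (iii) $\grmod_0\Pi\subseteq\grmod\Pi$. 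For (i), the key point is that $\La$ is a preprojective $\La$-module, so $\La\in\P[0]$; applying $\t$ repeatedly moves objects ``towards the injectives'', and for $m\ge 1$ one has $\t^m\La\in\I[-1]$ or $\t^m\La$ involves a shift, in any case $\Hom_\H(\La,\t^m\La)=0$ because a nonzero map would have to cross the torsion pair in the wrong direction (this uses that $\P$ has no projective--injective summands and that $\Hom(\P,\I)$ together with the structure of the AR-quiver forces vanishing; more precisely $\t^m\La$ for $m\ge1$ lies in a part of $\H$ with no maps from $\La$). For (ii) I would argue $\Pi$ is a $K$-algebra with finite-dimensional homogeneous components $\Pi_n=\Hom_\H(\La,\t^{-n}\La)\cong\Hom(\La,\t^{-n}\La)$ (finite-dimensional since $\H$ has finite-dimensional Hom-spaces), and semiperfectness of $\Pi$ is equivalent to $\grproj\Pi$ being Krull--Schmidt, which holds because $\grproj\Pi\iso\C$ and $\C$ consists of direct summands of finite sums of $\t^{-n}\La$, all lying in the Krull--Schmidt category $\H$. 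Right coherence, i.e.\ $\grmod\Pi$ abelian, is equivalent (via $\mod\C\iso\grmod\Pi$) to every morphism in $\C$ having a weak kernel in $\C$; this I would get from the fact that $\H$ is a hereditary \emph{abelian} category, so kernels exist in $\H$, and one must check the kernel (or a suitable approximation of it) lies again in the additive closure $\C$ of the $\t^{-n}\La$ --- this is where I expect to lean on the tilting description $\bfD^b(\H)\iso\bfD^b(\mod\La)$ and on hereditariness.

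Once the hypotheses are in place, Theorem~\ref{th:serre} tells me that $T\colon\grmod\Pi\to\H$ is exact and induces the desired equivalence $(\grmod\Pi)/(\grmod_0\Pi)\iso\H$ \emph{if and only if} the subcategory $\C\subseteq\H$ of direct summands of finite sums of $\t^{-n}\La$ ($n\in\bbZ$) satisfies (A1) and (A2). So the substance of the proof is verifying (A1) and (A2) for $(\La,\t^{-1})$ in $\H$. For (A1), I need: every $X\in\H$ admits an epimorphism $C\to X$ with $C\in\C$, and for $X\in\C$ this epimorphism can be chosen in $\Rad\C$. The existence of \emph{some} epimorphism from $\C$ is the statement that $\C$ generates $\H$; since $\La$ is a projective generator of $\mod\La$ and $\H$ is built from $\mod\La$ by the HRS-tilt, I would show that the shifted copies $\t^{-n}\La$ collectively generate $\H$ --- any object of $\H$ sits in $\I[-1]\vee\P[0]\vee\R[0]$, and the preprojective components are reached by $\t^{-n}\La$ for $n\ge 0$, the regular and postinjective parts by taking $n$ large enough (using that applying $\t^{-1}$ enough times pushes preprojectives past any fixed regular or postinjective module, combined with the fact that in the hereditary setting every indecomposable is a quotient of a sum of preprojectives shifted appropriately). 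The refinement ``in $\Rad\C$ when $X\in\C$'' amounts, by Lemma~\ref{le:almost-split} and the discussion in the proof of Theorem~\ref{th:serre}, to saying that every indecomposable object of $\C$ admits a right almost split morphism that is moreover an \emph{epimorphism}; this I would extract from the existence of almost split sequences in $\H$ (which $\H$ has, being a hereditary abelian category with Serre duality $\t$) together with the observation that for indecomposable non-projective objects of $\mod\La$, and more generally in $\H$, almost split sequences $0\to\t Z\to E\to Z\to 0$ furnish the required epimorphism $E\to Z$, while for the ``projective'' end one uses the structure of $\P$.

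For (A2) I must show: a morphism in $\C$ that is an epimorphism in $\H$ has finite length, i.e.\ every $\Rad^n\C$-morphism into its target factors through it for some $n$. By Lemma~\ref{le:semisimple}, this is equivalent to $\eff(\H,\C)\subseteq\mod_0\C$, i.e.\ effaceable coherent functors on $\C$ that come from epimorphisms in $\H$ are of finite length. Concretely, if $f\colon C\to C'$ is an epimorphism in $\H$ with $C,C'\in\C$, I want the ``phantom'' maps from deep in the radical filtration to factor through $f$; since $\H$ is hereditary the kernel $K=\Ker f$ again lies in $\H$, and I would bound the length of $f$ in terms of how far $K$ is from $C$ in the radical filtration, using that $\H$ has a well-behaved AR-structure with $\t$ periodic-like behaviour only in the regular part. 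The hereditary hypothesis is crucial here: $\Ext^2$ vanishing means that projective covers / syzygies behave simply, so the radical filtration of $\Hom(-,C')$ truncates after finitely many steps once we mod out by the image of $\Hom(-,C)$. The main obstacle, I expect, is precisely this finiteness in (A2) when $\La$ is of \emph{wild} type, because then $\Pi$ is not noetherian and one cannot invoke any chain condition; one must instead use the hereditary structure of $\H$ directly --- that every object has projective dimension at most one in $\H$ --- to see that an epimorphism with kernel in $\H$ automatically has bounded length. Establishing that bound, without noetherianity, is the technical heart of the argument; everything else is a matter of translating known structural facts about $\mod\La$, the HRS-tilt, and Serre duality into the language of $\C\subseteq\H$ and feeding them into Theorem~\ref{th:serre}.
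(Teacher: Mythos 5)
Your overall strategy is the same as the paper's: specialize Theorem~\ref{th:serre} to the triple $(\H,\La,\t^{-})$, identify $\C$ with $\I[-1]\vee\P[0]$, and verify the hypotheses together with (A1)--(A2). The problem is that at the places where actual work is required you only record an intention, and in particular (A2) --- which you yourself call the technical heart --- is not proved. The paper's argument there is short but specific: given an epimorphism $Y\to Z$ in $\C$ with kernel $X$, apply a power of $\t$ so that the whole exact sequence lies in $\I[-1]$; since $\Hom_\H(P,Z)=0$ for all $P\in\P[0]$, only radical morphisms from $\I[-1]$ matter, and by the exact sequence $0\to\Hom(-,X)\to\Hom(-,Y)\to\Hom(-,Z)\to\Ext^1(-,X)$ in $\mod\C$ it suffices that $\Rad^n\C$ annihilates $\Ext^1(-,X)|_{\I[-1]}$ for $n\gg 0$; Serre duality gives $\Ext^1(-,X)\cong D\Hom(\t^-X,-)$, and for a postinjective $\La$-module $M$ one has $\Rad^n(M,-)=0$ for $n\gg 0$ in $\mod\La$. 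Your proposed route --- bounding ``the length of $f$ in terms of how far $K$ is from $C$'', or the claim that hereditariness implies ``an epimorphism with kernel in $\H$ automatically has bounded length'' --- is not an argument: every epimorphism in any abelian category has its kernel in that category, and $\Ext^2=0$ by itself yields no bound on radical powers. Without the duality step and the directedness of the postinjective component (no chain condition being available, as you correctly note), the finiteness required in (A2) remains unestablished.

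There are further unfinished steps. For right coherence you correctly reduce to weak kernels in $\C$ but then defer to ``the tilting description and hereditariness''; what is actually needed, and what the paper uses, is that $\C=\I[-1]\vee\P[0]$ is closed under kernels in $\H$: since $\Hom_\H(\P[0]\vee\R[0],\I[-1])=0$, a subobject of an object of $\C$ has its shifted part in $\I[-1]$ and its degree-zero part embedded in a preprojective module, hence preprojective because $\La$ is hereditary. For the radical refinement in (A1) you hedge at ``the projective end'', whereas the point is that Serre duality for $\H$ provides an almost split sequence $0\to\t X\to X'\to X\to 0$ ending at \emph{every} indecomposable $X\in\C$, including $\La$, which is no longer projective in $\H$; the right almost split epimorphisms so obtained also discharge the hypothesis $\grmod_0\Pi\subseteq\grmod\Pi$ via Lemma~\ref{le:almost-split}, a condition you list under (iii) but never verify. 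So the skeleton matches the paper, but the substance of (A2), of coherence, and of the ampleness condition at the projective end is missing.
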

\begin{proof}
  We set $\s=\t^-$ and the distinguished object is $C=\La$. We need to
  check the conditions in Theorem~\ref{th:serre} for the triple
  $(\H,C,\s)$ and note that $\C=\I [-1]\vee\P[0]$. The preprojective
  algebra $\Pi$ is semiperfect since each homogeneous component is
  finite dimensional over a field. The algebra is right coherent
  because the category $\C$ has kernels. Here one uses that
  $\mod\C\iso\grmod\Pi$.  Serre duality for $\H$ provides for each
  indecomposable $X\in\C$ an almost split sequence
  $0\to \t X\to X'\to X\to 0$ \cite[Theorem~I.3.3]{RV2002}. By the
  definition of an almost split sequence, the morphism $X'\to X$ is
  right almost split. The condition (A1) is clear because we have for
  each regular module $X\in\R[0]$ an epimorphism $X'\to X$ from a
  projective module $X'\in\P[0]$. For an indecomposable object
  $X\in\C$ one uses the right hand morphism $X'\to X$ from the
  corresponding almost split sequence.  To check (A2) consider an
  epimorphism $Y\to Z$ in $\C$ which yields an exact sequence
  $0\to X\to Y\to Z\to 0$. We may apply a power of $\t$ and assume
  that the sequence lies in $\I[-1]$. Consider the induced exact
  sequence
  \[0\to\Hom(-,X)\to\Hom(-,Y)\to\Hom(-,Z)\to\Ext^1(-,X)\] in $\mod\C$.
  We need to show that all morphisms $Z'\to Z$ from $\Rad^n\C$ factor
  through $Y\to Z$ for $n\gg 0$.  For this it suffices to show that
  $\Rad^n\C$ annihilates the functor $\Ext^1(-,X)|_{\I[-1]}$ for
  $n\gg 0$, because $\Hom(P,Z)=0$ for all $P\in\P[0]$.  We
  have \[D\Hom(\t^- X,-)\cong\Ext^1(-,X)\] and from this the claim
  follows, because for any postinjective $\La$-module $M$ we have
  $\Rad^n(M,-)=0$ for $n\gg 0$ in $\mod\La$.
\end{proof}

When passing to derived categories we have the following immediate
consequence; see also \cite[Corollary~5.4]{Mi2012}.

\begin{cor}
  \pushQED{\qed} The quotient functor $\grmod\Pi\twoheadrightarrow\H$
  induces a triangle equivalence
  \[\bfD^{\mathrm b}((\grmod\Pi)/(\grmod_0\Pi))
    \iso\bfD^{\mathrm b}(\H)\iso\bfD^{\mathrm b}(\mod\La).\qedhere\]
\end{cor}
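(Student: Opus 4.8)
The plan is to establish the two displayed equivalences separately and then compose them; both are formal consequences of facts already in hand, so no direct analysis of the bounded derived category of a Serre quotient is required. The key observation is that Theorem~\ref{th:lenzing} has already identified the abelian quotient $(\grmod\Pi)/(\grmod_0\Pi)$ with the concrete hereditary category $\H$, so that the quotient functor $\grmod\Pi\twoheadrightarrow\H$ factors as the Serre localisation $\grmod\Pi\to(\grmod\Pi)/(\grmod_0\Pi)$ followed by this equivalence.

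For the first equivalence I would argue as follows. The equivalence of abelian categories $(\grmod\Pi)/(\grmod_0\Pi)\iso\H$ furnished by Theorem~\ref{th:lenzing} is exact, being simultaneously left and right exact, and the same holds for its quasi-inverse. An exact functor carries short exact sequences to short exact sequences, hence quasi-isomorphisms to quasi-isomorphisms, so it descends to a triangle functor between the bounded derived categories. Applying this both to the equivalence and to its quasi-inverse, the natural isomorphisms witnessing the equivalence lift to the derived level and produce mutually quasi-inverse triangle functors. This yields the triangle equivalence $\bfD^{\mathrm b}((\grmod\Pi)/(\grmod_0\Pi))\iso\bfD^{\mathrm b}(\H)$, induced as stated by the quotient functor.

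For the second equivalence I would simply invoke the tilting equivalence already recorded in the text preceding Theorem~\ref{th:lenzing}: the object $\La\in\P[0]$ is a tilting object in the hereditary abelian category $\H$, and $\RHom(\La,-)$ provides a triangle equivalence $\bfD^{\mathrm b}(\H)\iso\bfD^{\mathrm b}(\mod\La)$ by Happel's tilting theorem. Composing the two triangle equivalences gives the assertion. The only step requiring any care is the passage from an abelian equivalence to a derived equivalence, and even this is routine once one records that the quasi-inverse is again exact, so that the $2$-functoriality of $\bfD^{\mathrm b}(-)$ on exact functors applies; I therefore expect no genuine obstacle beyond bookkeeping.
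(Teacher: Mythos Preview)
Your proposal is correct and matches the paper's own treatment: the corollary is stated there with no proof at all (it is flagged as an ``immediate consequence''), and your two steps---deriving the abelian equivalence of Theorem~\ref{th:lenzing} and then invoking the tilting equivalence $\bfD^{\mathrm b}(\H)\iso\bfD^{\mathrm b}(\mod\La)$ already recorded in the text---are exactly the obvious justifications the paper leaves implicit.
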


The following example connects the results of Serre and Lenzing; see \cite[Proposition~6.3]{Le1986}.

\begin{exm}
  Consider the Kronecker algebra $\La=\smatrix{K&0\\K^2&K}$. In that
  case $\H$ identifies with the category of coherent shaves on the
  projective line $\bbP^1$. More precisely, the category $\coh\bbP^1$
  admits a tilting object $T=\mathscr
  O_{\bbP^1}\oplus\mathscr O_{\bbP^1}(1)$ such that $\End(T)\cong\La$.
  Then the functor $\RHom(T,-)$ yields a triangle equivalence
  \[\bfD^{\mathrm b}(\coh\bbP^1)\iso\bfD^{\mathrm b}(\mod\La)\]
which   restricts to an equivalence $\coh\bbP^1\iso\H$. Note that this
  identifies the twist $\mathscr F\mapsto \mathscr F(2)$ with
  $X\mapsto \s X$. We have
  \[R:=K[x,y]\cong\bigoplus_{n\ge 0}\Hom(\mathscr O_{\bbP^1},\mathscr
    O_{\bbP^1}(n))\]
  and therefore
  \[\Pi\cong\bigoplus_{n\ge 0}\smatrix{R_{2n}&R_{2n-1}\\R_{2n+1}&R_{2n}}.\]
  The assignment
  \[\bigoplus_{n\in\bbZ}M_n\mapsto
    \bigoplus_{n\in\bbZ}(M_{2n}\oplus M_{2n-1})\] provides an
  equivalence $\grmod R\iso\grmod\Pi$ which identifies the equivalence
  from Serre's theorem in Example~\ref{ex:serre} with the equivalence
  in Theorem~\ref{th:lenzing}, though their twisting objects are
  different ($\mathscr O_{\bbP^1}$ in $\coh\bbP^1$ versus $\La$ in $\H$).
\end{exm}

The example suggests that there are other and arguably better choices
for a distinguished object $C\in\H$ and a twist $\s\colon\H\iso\H$
(different from $C=\La$ and $\s=\t^-$), in particular when $\La$ is a
tame hereditary algebra; see \cite{Ku2008,Ku2009} for a detailed discussion.

\subsection*{Acknowledgements}

I am grateful to Andrew Hubery and Dirk Kussin for inspiration and
helpful comments on this work. Additional thanks to an anonymous
referee for several comments concerning the exposition. This work was
supported by the Deutsche Forschungsgemeinschaft (SFB-TRR 358/1 2023 -
491392403).

\end{document}